\newtheorem{thm}{Theorem}[section]
\newtheorem{cor}[thm]{Corollary}
\newtheorem{lem}[thm]{Lemma}
\newtheorem{prop}[thm]{Proposition}
\theoremstyle{definition}
\newtheorem{defn}[thm]{Definition}
\newtheorem{rem}[thm]{Remark}
\numberwithin{equation}{section}
\begin{document}

\author{Xiaoyan Su $^1$, Shiliang Zhao$^2$, Miao Li$^3$}

\title{Dispersive estimates for time and space fractional  Schr\"odinger equations}
\maketitle
\begin{abstract}
In this paper, we consider the Cauchy problem for the fractional Schr\"odinger equation $i D_t^\alpha u + (-\Delta)^{\frac{\beta}{2}} u =0$ with $0<\alpha<1$, $\beta>0$. We establish the dispersive estimates for the solutions. In particular, we prove that the decay rates are sharp.

\end{abstract}

2010 MSC: 26A33, 35Q41

Keywords: fractional Schr\"odinger equation, dispersive estimates


\section{Introduction}
Fractional differential equations { have }become a very important subject in the modern mathematical research because of its various applications in physics, biology, probability and so on. We refer the {  readers } to  \cite{17}\cite{14}\cite{19} for a systematic introduction about fractional calculus.  In probability, it has a close connection  to the time-changed stochastic process, for more details, we refer the { readers} to \cite{15}.  In physics, fractional calculus are introduced to study the quantum phenomena, see \cite{8}\cite{12}\cite{13}\cite{21}.

It is well known that the Schr\"odinger equation is one of the most important { equations} in the quantum mechanics, and the well-posedness of this equation has attracted the attention of thousands of researches, see \cite{4}\cite{5}\cite{20} for more information.
And its fractional generalisation can be divided into mainly three fields: (1) space fractional Schr\"odinger equation (SFSE), see\cite{8,12,22}; (2) time fractional Schr\"odinger equation (TFSE), see\cite{1,16,23}; (3) and both time and space fractional Schr\"odinger equation (TSFSE), see \cite{21,24,25}.

Mathematical literatures about space fractional Schr\"odinger equation include \cite{7,10,11} and references therein. From the physics viewpoint, (SFSE) can be derived by using the Feynman path integral techniques by replacing the Brownian quantum paths  with L\'evy stable paths, and the Markovian character are still remained, see \cite{12} .

As for the time fractionalisation of the Schr\"odinger equation, the case is more controversial, mainly about if we should fractionalize the constant $i$. In \cite{16}, the author use Wick rotation to raise a fractional power of $i$, which turns out to be the clasical Schr\"odinger equations with a time  dependent Hamiltonian. Whereas in the literature of \cite{1} the authors derived the (TFSE) using the Feynman paths method, which doesn't change the constant $i$.

In this paper, we study the fractional Schr\"odinger equation in both time and space, which is a generalisation of the equation in  \cite{1}:

\begin{equation}\label{fse}
	\left\{\begin{split}
		iD_t^\alpha u(t,x) &+(-\Delta )^{\frac{\beta}{2} }u(t,x)=0, \\
		u(0,x)&= \varphi (x),
	\end{split}
	\right.
\end{equation}
where $0<\alpha<1, \beta>0 $, $D_t^\alpha$ denotes the Caputo fractional derivative and $(-\Delta)^{\frac{\beta}{2}}$ denotes the fractional Laplacian.

The aim  of this paper is to study the dispersive estimates of the equation \eqref{fse}. When $\alpha=1, \beta=2$, \eqref{fse} becomes the classical Schr\"odinger equation. As is well known, the following estimates hold:
\[\|u(t)\|_{L^{p'}(\mathbb R^n)}\leq t^{-n(\frac{1}{p}- \frac{1}{2} )}\|u\|_{L^{p}(\mathbb R^n)}\quad \text{for}\quad 1\le p\leq 2, t>0.\]
The above decay estimates are very important in the nonlinear Schr\"odinger equation theory. See for example \cite{5} and references therein. When $\alpha=1, \beta>0 \text{ and}\quad \beta\neq1$, the dispersive estimates have been established in \cite{6}, and { they have also shown that the decay rates are sharp.}

In the first place, we get a $L^\infty$ estimates of the fundamental solutions of \eqref{fse}. In fact, the solution of \eqref{fse} can be expressed as a Fourier multiplier,
$$  u(x,t)=[E_\alpha (-it^\alpha|\xi|^{\beta} ) \hat\varphi(\xi)]^{\vee}(x)\quad t>0,   $$
where $\varphi\in \mathcal S(\mathbb R^n) $ and $E_\alpha(z)$ is the Mittag-Leffler function.
Since we have $|E_\alpha(z)|\leq C$ for $0<\alpha<1, \frac{\pi}{2}\alpha<|\arg z|<\pi $ (see \cite{18} ),
it follows that $\|u(t)\|_{L^2(\mathbb R^n )}\leq C\|\varphi \|_{L^2(\mathbb R^n )}$.

Set $T_t\varphi(x)=[E_\alpha (-it^\alpha|\xi|^{\beta} ) \hat\varphi(\xi)]^{\vee}(x)= K_t \ast \varphi (x) $, where $K_t(x)=[E_\alpha (-it^\alpha|\xi|^{\beta} )]^{\vee}(x)$  denotes the distributional kernel of $T_t$. Then we have the following result:
\begin{prop}
	For any fixed $t>0, K_t(\cdot)\in L^\infty(\mathbb R^n)$ iff $ \beta>n$.
\end{prop}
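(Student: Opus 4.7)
The plan is to first reduce to the case $t=1$ by scaling and then treat the two directions separately, with the hard direction requiring extraction of the leading-order singularity of $K_1$ at the origin from the asymptotic expansion of the Mittag-Leffler function. The change of variables $\xi \mapsto t^{-\alpha/\beta}\xi$ in the inverse Fourier integral gives $K_t(x) = t^{-n\alpha/\beta} K_1(t^{-\alpha/\beta}x)$, so $K_t \in L^\infty$ iff $K_1 \in L^\infty$. Throughout, I will use the standard asymptotic expansion (from \cite{18}): for $0<\alpha<1$ and $|\arg z|>\pi\alpha/2$,
$$E_\alpha(z) = -\sum_{k=1}^N \frac{z^{-k}}{\Gamma(1-\alpha k)} + O(|z|^{-N-1}),$$
applicable at $z=-i|\xi|^\beta$ since $|\arg z| = \pi/2 > \pi\alpha/2$.

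For sufficiency ($\beta>n \Rightarrow K_1 \in L^\infty$), the expansion with $N=1$ yields $E_\alpha(-i|\xi|^\beta) = O(|\xi|^{-\beta})$ as $|\xi|\to\infty$, while the uniform bound $|E_\alpha|\le C$ on the sector controls small $|\xi|$. Thus $\xi \mapsto E_\alpha(-i|\xi|^\beta)\in L^1(\mathbb{R}^n)$ precisely when $\beta>n$, and the Riemann--Lebesgue lemma gives $K_1 \in C_0(\mathbb{R}^n)\subset L^\infty$.

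For necessity ($\beta\le n \Rightarrow K_1\notin L^\infty$), I will show $K_1(x)$ blows up as $x\to 0$ by isolating the leading asymptotic term. Fix a smooth cutoff $\chi$ equal to $1$ on the unit ball with compact support, and choose $N$ with $(N+1)\beta>n$. Decompose
$$E_\alpha(-i|\xi|^\beta) = \chi(\xi)\, E_\alpha(-i|\xi|^\beta) + \sum_{k=1}^N c_k\,(1-\chi(\xi))\,|\xi|^{-k\beta} + r_N(\xi),$$
where $c_1 = -i/\Gamma(1-\alpha)\ne 0$ and $r_N$ is supported in $\{|\xi|\ge 1\}$ with $r_N=O(|\xi|^{-(N+1)\beta})\in L^1$. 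The first piece has a Schwartz inverse Fourier transform, and $r_N^\vee$ is bounded by Hausdorff--Young. For each $k$, write $(1-\chi)|\xi|^{-k\beta}=|\xi|^{-k\beta}-\chi|\xi|^{-k\beta}$: when $0<k\beta<n$, the homogeneous distribution has inverse Fourier transform equal to a nonzero constant times $|x|^{k\beta-n}$, while $\chi|\xi|^{-k\beta}\in L^1$ contributes a bounded continuous function; when $k\beta>n$, the whole term is $L^1$ and gives a bounded contribution. Assembling, $K_1(x) = c_1 C_{n,\beta}|x|^{\beta-n}$ plus strictly less singular terms as $x\to 0$, so $K_1$ is unbounded near the origin.

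The main obstacle is the borderline case $\beta=n$, where $|\xi|^{-n}$ is not locally integrable and the homogeneous-distribution identity fails. I would handle this directly by showing that $\mathcal{F}^{-1}[(1-\chi)|\xi|^{-n}](x)$ diverges logarithmically at $x=0$: a stationary-phase-free estimate of $\int_{1\le|\xi|\le 1/|x|} |\xi|^{-n}\,d\xi = |S^{n-1}|\log(1/|x|)$ extracts the $\log(1/|x|)$ divergence, while the oscillatory contribution from $|\xi|\ge 1/|x|$ is controlled by integration by parts. A secondary check, that higher-order singularities $|x|^{k\beta-n}$ with $k\ge 2$ cannot cancel the leading $|x|^{\beta-n}$ behavior, is automatic since these vanish to strictly higher (or equal but lower-coefficient) order near $0$.
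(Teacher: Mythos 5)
Your proof is correct and follows essentially the same route as the paper's: rescale to $t=1$, cut off the low frequencies, substitute the large-argument expansion of $E_\alpha$ on the high-frequency piece, invert the homogeneous terms via $\mathcal{F}^{-1}(|\xi|^{-\theta})=C|x|^{\theta-n}$, and extract a logarithmic divergence in the borderline case (the paper carries this out for every $\beta=n/m$ to obtain a full expansion, while you correctly observe that only $\beta=n$ needs it for mere unboundedness, since otherwise the leading power singularity $|x|^{\beta-n}$ already decides the matter). One small slip: $\chi(\xi)E_\alpha(-i|\xi|^\beta)$ need not have a Schwartz inverse Fourier transform because $|\xi|^\beta$ is not smooth at $\xi=0$, but it is compactly supported and bounded, hence in $L^1$, which is all the boundedness you actually use.
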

Note that  when $\alpha=1, \beta>0$, similar results have been proved by \cite{7}.  {{Compared with the classical Schr\"odinger equation, where $\alpha=1$, $\beta=2$ }, and $K_t(x)=\frac{1}{(4\pi i )^{\frac{n}{2}}}e^{\frac{i|x|^2}{4t}}\in L^\infty (\mathbb R^n)$ for any fixed $t>0$, our result requires the condition that $\beta>n$ to make sure that $K_t(\cdot)\in L^\infty(\mathbb R^n)$, this phenomenon is caused by the different asymptotic behaviours of $E_\alpha (-i|\xi|^2) $ for $0<\alpha <1$  and  $\alpha=1$.
When $0<\alpha <1, E_\alpha (-i|\xi|^2)$ has quadratic decay, whereas when $\alpha=1, E_\alpha (-i|\xi|^2)=e^{-i|\xi|^2}$, which doesn't have any decay when $|\xi|\rightarrow \infty $ but oscillates. }

As a result, when $\beta>n$, we can easily get the $L^\infty $ estimates for the solutions of \eqref{fse} by Young's inequality.
 As for the case  $0<\beta\leq n$, we need to localize the frequencies.

  Let us consider a function $\phi \in C_c^\infty$ such that
\[\phi (\xi)=\begin{cases}
 	1, & \text{if}\quad |\xi|\leq 1,\\
 	0, & \text{if}\quad |\xi|\geq 2,
 \end{cases}\]
 and  define the sequence $(\psi_j)_{j\in \mathbb Z}\subset \mathcal{S}(\mathbb R^n)$ by $\psi_j(\xi)=\phi (\frac{\xi}{2^j} )-\phi (\frac{\xi}{2^{j-1}} )$. We use $\psi$ to denote $\psi_0$. It is easy to notice that $ \text{supp} \psi_j \subset \{2^{j-1}\leq |\xi|\leq 2^{j+1}\}.$
Set $P_N\varphi(x) =[\psi_j (\xi ) \hat\varphi(\xi) ]^{\vee}(x)$, where $N$ is dyadic number, i.e. $N=2^j$ for some {  integer $j$.}
The main results of this paper are as follows:
\begin{thm}\label{Lebesgue estimate}
	\begin{enumerate}
		\item For $0<\alpha<1, \beta>0$ and {  dyadic number $N$}, we have
		\[\|P_NT_t\varphi\|_{L^{\infty}(\mathbb R^n)}\lesssim \frac{N^n}{1+t^{\alpha}N^{\beta}}\|\varphi \|_{L^1(\mathbb R^n)}. \]
		\item For $0<\alpha<1, \beta>n$, we have
		\[\|T_t \varphi\|_{L^\infty(\mathbb R^n)}\lesssim t^{-\frac{n}{\beta}\alpha}\|\varphi\|_{L^1(\mathbb R^n)}. \]
	\end{enumerate}
\end{thm}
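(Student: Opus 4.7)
The plan is to prove Part (1) by reducing to an $L^1$ estimate on the Fourier side and a short case analysis based on the magnitude of $t^\alpha N^\beta$, and then derive Part (2) from Part (1) by summing the Littlewood--Paley pieces.

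For Part (1), I would observe that
\[ P_N T_t \varphi(x) = K_{N,t}*\varphi(x), \qquad K_{N,t}(x) := \bigl[\psi_N(\xi)\, E_\alpha(-it^\alpha|\xi|^\beta)\bigr]^{\vee}(x), \]
where $\psi_N(\xi) := \psi(\xi/N)$ is supported in $\{|\xi|\sim N\}$. Young's inequality gives $\|P_N T_t\varphi\|_{L^\infty}\le \|K_{N,t}\|_{L^\infty}\|\varphi\|_{L^1}$, and the trivial bound $\|K_{N,t}\|_{L^\infty}\le \|\psi_N(\cdot)E_\alpha(-it^\alpha|\cdot|^\beta)\|_{L^1}$ reduces the task to controlling that last $L^1$ norm, which is at most $(\text{measure of support})\cdot \sup_{\xi\in\mathrm{supp}\,\psi_N}|E_\alpha(-it^\alpha|\xi|^\beta)|\lesssim N^n\cdot \sup_{|\xi|\sim N}|E_\alpha(-it^\alpha|\xi|^\beta)|$.

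I would then split into two regimes. When $t^\alpha N^\beta \le 1$, the uniform bound $|E_\alpha(z)|\le C$ on the sector $\pi\alpha/2<|\arg z|<\pi$ (applicable here since $\arg(-it^\alpha|\xi|^\beta)=-\pi/2$ and $\alpha<1$) yields $\|K_{N,t}\|_{L^\infty}\lesssim N^n$. When $t^\alpha N^\beta \ge 1$, I would invoke the classical large-argument asymptotic
\[ E_\alpha(z) = -\frac{1}{\Gamma(1-\alpha)\,z} + O(|z|^{-2}) \quad (|z|\to\infty) \]
in the same sector, which gives $|E_\alpha(-it^\alpha|\xi|^\beta)|\lesssim (t^\alpha|\xi|^\beta)^{-1}\sim(t^\alpha N^\beta)^{-1}$ on $\mathrm{supp}\,\psi_N$, hence $\|K_{N,t}\|_{L^\infty}\lesssim N^n/(t^\alpha N^\beta)$. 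The two regimes combine to give the claimed bound $\lesssim N^n/(1+t^\alpha N^\beta)$.

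For Part (2), I would use the Littlewood--Paley decomposition $T_t\varphi = \sum_{j\in\mathbb Z} P_{2^j}T_t\varphi$ and Part (1) to bound
\[ \|T_t\varphi\|_{L^\infty} \lesssim \|\varphi\|_{L^1}\sum_{j\in\mathbb Z}\frac{2^{jn}}{1+t^\alpha 2^{j\beta}}. \]
Setting $N_0:=t^{-\alpha/\beta}$ so that $t^\alpha N_0^\beta\sim 1$ and cutting the sum at the dyadic scale nearest $N_0$: the low-frequency piece $\sum_{2^j\le N_0}2^{jn}$ is a geometric series controlled by $N_0^n=t^{-n\alpha/\beta}$, while the high-frequency piece $t^{-\alpha}\sum_{2^j>N_0}2^{j(n-\beta)}$ converges geometrically precisely because $\beta>n$, with sum $\sim t^{-\alpha}N_0^{n-\beta}=t^{-n\alpha/\beta}$. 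Both halves match the target rate.

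The main obstacle---the step on which the entire argument hinges---is the quantitative use of the asymptotics of $E_\alpha$ on the sector containing the negative imaginary axis; once the pointwise decay $|E_\alpha(-it^\alpha|\xi|^\beta)|\lesssim (t^\alpha|\xi|^\beta)^{-1}$ is available, everything else is Young's inequality and a dyadic sum. The hypothesis $\beta>n$ in Part (2) is used only to make the high-frequency side of this dyadic sum converge, which is consistent with the preceding proposition requiring the same restriction for $K_t$ to lie in $L^\infty$.
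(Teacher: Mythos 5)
Your argument is correct and follows essentially the same route as the paper: both parts rest on the boundedness of $E_\alpha$ on the sector $|\arg z|>\tfrac{\pi\alpha}{2}$ together with the large-argument decay $|E_\alpha(z)|\lesssim |z|^{-1}$, combined with Young's inequality and a sup-times-measure bound on the localized multiplier. The only cosmetic difference is in Part (2), where the paper rescales $\xi\mapsto t^{\alpha/\beta}\xi$ and integrates $\int_{\mathbb R^n}(1+|\xi|^\beta)^{-1}\,d\xi$ directly (convergent precisely when $\beta>n$), whereas you reach the same bound by summing the dyadic estimates of Part (1) split at $N_0=t^{-\alpha/\beta}$ --- a discrete version of the identical computation.
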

{  Notice that when $0<\beta\leq n$, for fixed $N$, the time decay is $t^{-\alpha}$, which doesn't depend on the dimension. That is different from the classical case. }	

Besides, the above decay estimates are sharp in the following sense:
\begin{prop}\label{sharpness}
	There exists $t_0, N_0$ such that for all $t>t_0, N>N_0$, we have
	\[\sup_{x\in \mathbb R^n} |K_t(x)|\gtrsim \frac{N^n}{1+t^\alpha N^\beta}. \]
	
\end{prop}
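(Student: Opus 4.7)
The strategy is to transfer a lower bound from the frequency-localized kernel $P_N K_t$ to $K_t$ itself via Young's inequality. Introduce the test function $\varphi_N := \psi_N^{\vee}$, so that $\widehat{\varphi_N}=\psi_N$ and, by the Fourier-multiplier definition of $P_N$,
\[
P_N K_t \;=\; K_t \ast \varphi_N.
\]
Since $\varphi_N(x)=N^n\psi^\vee(Nx)$ with $\psi\in C_c^\infty$, the $L^1$ norm
\(\|\varphi_N\|_{L^1}=\|\psi^{\vee}\|_{L^1}=:C_0\)
is finite and independent of $N$. Young's inequality $\|K_t\ast\varphi_N\|_{L^\infty}\le \sup_x|K_t(x)|\cdot\|\varphi_N\|_{L^1}$ (interpreted in $[0,\infty]$, so that the $\beta\le n$ case of Proposition~1.1 is handled trivially) rearranges to
\[
\sup_{x\in\mathbb R^n}|K_t(x)| \;\ge\; C_0^{-1}\,|P_N K_t(0)|,
\]
reducing the proposition to a lower bound on $|P_N K_t(0)|$.

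Writing out the Fourier integral at the origin and rescaling $\xi=N\eta$ gives
\[
P_N K_t(0) \;=\; (2\pi)^{-n}\int E_\alpha(-it^\alpha|\xi|^\beta)\psi_N(\xi)\,d\xi \;=\; \frac{N^n}{(2\pi)^n}\int E_\alpha(-i\tau|\eta|^\beta)\psi(\eta)\,d\eta,
\]
where $\tau:=t^\alpha N^\beta$ and the integrand is supported in the annulus $\{1/2 \le |\eta| \le 2\}$. I will choose $t_0,N_0$ large enough that $\tau\ge 1$ throughout; then $z=-i\tau|\eta|^\beta$ has $|z|\ge (1/2)^\beta$ bounded below and $|\arg z|=\pi/2>\pi\alpha/2$ (using $\alpha<1$), placing it in the sector where the classical Mittag--Leffler asymptotic expansion (cf.~\cite{18}) applies. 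This gives, uniformly in $\eta\in\operatorname{supp}\psi$,
\[
E_\alpha(-i\tau|\eta|^\beta) \;=\; \frac{1}{i\Gamma(1-\alpha)\,\tau\,|\eta|^\beta}+O(\tau^{-2}).
\]

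Integrating against $\psi$ yields
\[
P_N K_t(0) \;=\; \frac{N^n}{i(2\pi)^n\Gamma(1-\alpha)\,\tau}\int\frac{\psi(\eta)}{|\eta|^\beta}\,d\eta \;+\; O(N^n\tau^{-2}).
\]
The leading integral $I:=\int\psi(\eta)|\eta|^{-\beta}\,d\eta$ is strictly positive: the standard radial choice of $\phi$ makes $\psi=\phi(\cdot)-\phi(2\cdot)\ge 0$ and nontrivial on the annulus, so the integrand is nonnegative and nonzero. Enlarging $t_0,N_0$ to absorb the $O(\tau^{-2})$ remainder into the $\tau^{-1}$ main term yields $|P_N K_t(0)|\gtrsim N^n/\tau$, and since $1+\tau\asymp\tau$ in this regime, combining with the transfer estimate gives $\sup_x|K_t(x)|\gtrsim N^n/(1+t^\alpha N^\beta)$.

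The main technical obstacle is the one-term asymptotic: one needs the Mittag--Leffler expansion with an error bound uniform in $\eta$ across $\operatorname{supp}\psi$, together with the non-vanishing of the leading coefficient $I$. Both are handled by the asymptotic formula on the sector $|\arg z|>\pi\alpha/2$ and the standard Littlewood--Paley construction of $\psi$; no further cancellation between oscillatory factors is needed because the relevant leading term is pure imaginary and the integral against the nonnegative $\psi$ cannot vanish.
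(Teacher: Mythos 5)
Your proposal is correct and follows essentially the same route as the paper: both arguments extract the one-term Mittag--Leffler asymptotic $E_\alpha(z)=-z^{-1}/\Gamma(1-\alpha)+O(|z|^{-2})$ on the ray $\arg z=-\pi/2$, observe that the resulting leading term is a nonvanishing (purely imaginary) multiple of $N^n\tau^{-1}\int\psi(\eta)|\eta|^{-\beta}d\eta$ with $\tau=t^\alpha N^\beta$, and absorb the $O(N^n\tau^{-2})$ remainder for $\tau$ large. The only differences are cosmetic: you evaluate the localized kernel at $x=0$ instead of at a point with $N|x|$ small where the Bessel factor is positive, and you make explicit via Young's inequality the transfer from $|P_NK_t(0)|$ to $\sup_x|K_t(x)|$ --- a step the paper leaves implicit when it writes $|K_t(x)|=N^n|I|$ for what is really the frequency-localized kernel.
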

Moreover, we have the following dispersive estimates:
\begin{thm} \label{Besov estimate}

For $0<\alpha<1, 2\leq r\leq \infty$, we have
	\begin{equation}\label{eq7}	
	\|T_t\varphi \|_{L^\infty(\mathbb R^n)}\lesssim (1+t^\alpha)^{-(1-\frac{2}{r})} \bigg[ \|\varphi  \|_{\dot{B}^{\frac{n}{r'}}_{r', 1}  } +  \|\varphi  \|_{\dot{B}_{r',1}^{\frac{n}{r'}-\beta (1-\frac{2}{r} )}(\mathbb R^n)} \bigg],\end{equation} 
	 
	\begin{equation}\label{eq8}
	\|T_t\varphi \|_{L^r(\mathbb R^n)}\lesssim (1+t^\alpha)^{-(1-\frac{2}{r})}\bigg[ \|\varphi  \|_{\dot{B}^{n(1-\frac{2}{r})}_{r', 2}  }+  \|\varphi \|_{\dot{B}_{r',2}^{(n-\beta) (1-\frac{2}{r} )}(\mathbb R^n)} \bigg].\end{equation}
and
\begin{equation}\label{eq9}
 {\|T_t\varphi \|_{\dot{B}^{s}_{r, p}  }\lesssim (1+t^\alpha)^{-(1-\frac{2}{r})}\bigg[ \|\varphi  \|_{\dot{B}^{n(1-\frac{2}{r})+s}_{r', p}  }+  \|\varphi \|_{\dot{B}_{r',p}^{(n-\beta) (1-\frac{2}{r} )+s}(\mathbb R^n)} \bigg].}\end{equation}
\end{thm}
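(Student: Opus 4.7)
The plan is to reduce each of \eqref{eq7}--\eqref{eq9} to a single frequency-localized bound on $P_NT_t$ obtained by interpolating the $L^1\to L^\infty$ estimate of Theorem~\ref{Lebesgue estimate}(1) against the trivial $L^2$ bound $\|T_t\varphi\|_{L^2}\lesssim\|\varphi\|_{L^2}$ (which holds because $|E_\alpha|\le C$ in the relevant sector). The three inequalities then differ only in how this localized bound is summed over dyadic $N$, combined with a split at $N=1$ that extracts the common prefactor $(1+t^\alpha)^{-(1-2/r)}$ and produces the two Besov-norm terms.

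For the first step, applying Riesz--Thorin between the endpoints $L^1\to L^\infty$ (with norm $N^n/(1+t^\alpha N^\beta)$) and $L^2\to L^2$ (with norm $O(1)$) at interpolation parameter $\theta=2/r$ yields
\[\|P_NT_t\varphi\|_{L^r}\lesssim\Big(\frac{N^n}{1+t^\alpha N^\beta}\Big)^{1-\frac{2}{r}}\|P_N\varphi\|_{L^{r'}}\qquad(2\le r\le\infty),\]
where I have used the standard fattened-projector trick ($P_NT_t=P_NT_t\widetilde P_N$ with $\widetilde P_N$ a slightly larger Littlewood--Paley cutoff) to write $\|P_N\varphi\|_{L^{r'}}$ in place of $\|\varphi\|_{L^{r'}}$, paying only a finite overlap of neighbouring dyadic pieces. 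For the $L^\infty$ output required in \eqref{eq7}, Bernstein's inequality on $P_N$ contributes the extra factor $N^{n/r}$, giving
\[\|P_NT_t\varphi\|_{L^\infty}\lesssim\frac{N^{n/r'}}{(1+t^\alpha N^\beta)^{1-\frac{2}{r}}}\|P_N\varphi\|_{L^{r'}}.\]

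The second step is the two-regime split at $N=1$. For $N\ge1$ the trivial inequality $1+t^\alpha N^\beta\ge1+t^\alpha$ gives $(1+t^\alpha N^\beta)^{-(1-2/r)}\le(1+t^\alpha)^{-(1-2/r)}$; for $N\le1$ the elementary algebraic bound $N^\beta(1+t^\alpha)\le1+t^\alpha N^\beta$ (equivalent to $N^\beta\le1$) gives $(1+t^\alpha N^\beta)^{-(1-2/r)}\le N^{-\beta(1-2/r)}(1+t^\alpha)^{-(1-2/r)}$. Both regimes therefore factor out the common prefactor $(1+t^\alpha)^{-(1-2/r)}$, while the low-frequency regime incurs an extra regularity shift of $-\beta(1-2/r)$; this is exactly the source of the second Besov norm in each estimate.

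The third step is to sum in $N$. For \eqref{eq7} the triangle inequality $\|T_t\varphi\|_{L^\infty}\le\sum_N\|P_NT_t\varphi\|_{L^\infty}$ produces $\ell^1$ summation, hence the second index $1$ in the Besov norms with regularities $n/r'$ (high frequency) and $n/r'-\beta(1-2/r)$ (low frequency). For \eqref{eq8}, the Littlewood--Paley characterization of $L^r$ together with Minkowski's inequality in $L^{r/2}$ (valid for $r\ge2$) yields $\|T_t\varphi\|_{L^r}\lesssim(\sum_N\|P_NT_t\varphi\|_{L^r}^2)^{1/2}$, so the summation is in $\ell^2$ and the Besov second index is $2$, with regularities $n(1-2/r)$ and $(n-\beta)(1-2/r)$. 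For \eqref{eq9}, inserting the $L^{r'}\to L^r$ bound directly into $\|T_t\varphi\|_{\dot B^s_{r,p}}^p=\sum_N(N^s\|P_NT_t\varphi\|_{L^r})^p$ merely shifts each regularity index by $+s$ and preserves the $\ell^p$ summation, giving \eqref{eq9}. The main technical obstacle is the bookkeeping with the fattened projector $\widetilde P_N$ so that the right-hand side assembles cleanly into honest homogeneous Besov norms; the two-regime comparison in Step~2, though short, is the crucial algebraic ingredient without which the common prefactor $(1+t^\alpha)^{-(1-2/r)}$ would not emerge.
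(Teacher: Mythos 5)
Your proof is correct and rests on the same pillars as the paper's: the frequency-localized dispersive bound of Theorem~\ref{Lebesgue estimate}(1), the $L^2$ bound coming from $|E_\alpha|\le C$ on the ray $\arg z=-\pi/2$, Riesz--Thorin interpolation between them, and dyadic summation into Besov norms via a fattened projector. The organization differs in one genuine way. The paper never carries the unified factor $N^n/(1+t^\alpha N^\beta)$ through the interpolation; instead it derives two separate localized estimates --- one with no time decay, $\|P_NT_t\varphi\|_{L^\infty}\lesssim N^{n/r'}\|P_{\sim N}\varphi\|_{L^{r'}}$ (obtained by H\"older plus Hausdorff--Young rather than your Bernstein step), and one with decay, $\lesssim t^{-\alpha(1-2/r)}N^{n/r'-\beta(1-2/r)}\|P_{\sim N}\varphi\|_{L^{r'}}$ (obtained by interpolating the $t^{-\alpha}N^{n-\beta}$ bound against the first) --- sums each over \emph{all} dyadic $N$, and only then extracts $(1+t^\alpha)^{-(1-2/r)}$ by taking the better of the two global bounds according to whether $t\lesssim 1$ or $t\gtrsim 1$. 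You instead keep the single interpolated bound $\bigl(N^n/(1+t^\alpha N^\beta)\bigr)^{1-2/r}$ and split the dyadic sum at $N=1$, using $1+t^\alpha N^\beta\ge 1+t^\alpha$ for $N\ge 1$ and $1+t^\alpha N^\beta\ge N^\beta(1+t^\alpha)$ for $N\le 1$. The two bookkeepings yield identical right-hand sides (in the paper each Besov norm collects all frequencies, in yours only the high or only the low ones), and your version is arguably cleaner in that every intermediate inequality already carries the final prefactor uniformly in $t$. Both arguments share the same harmless, unaddressed endpoint caveat at $r=\infty$ in \eqref{eq8}, where the embedding $\dot{B}^0_{r,2}\hookrightarrow L^r$ is invoked.
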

\begin{rem}
	 The Strichartz estimates for the classical Schr\"odinger equation plays an important role in the well-posedness for the semilinear equations. But due to the loss of the semigroup property for the Mittag-Leffler functions, we can not use the $T T^*$ argument to prove the Strichartz estimates for the solutions of \eqref{fse} directly.
\end{rem}

Throughout this paper, we say $f\sim g$ if and only if there exists positive constants $c, C$ such that $cf\leq g\leq C f$. We say that $f\lesssim g$
if and only if there exists a positive constant $C$ such that $f\leq C g$.

\section{Preliminaries}

In this section, we give some explanations about the notations and
  introduce some basic knowledge about the Mittag-Leffler functions and Bessel functions and Besov spaces which will be used in the later sections.
  \begin{defn}[see \cite{19} ] The fractional Riemann-Liouville integral of order $\alpha >0$ is defined as
\[J^{\alpha} f(t)= \frac{1}{\Gamma(\alpha)}\int_0^t(t-s)^{\alpha-1}f(s)\,ds,\]
where \begin{equation*}
g_{\alpha}(t)=\left\{\begin{aligned}
	&\frac{t^{\alpha-1}}{\Gamma(\alpha)}\quad &\text{when} \quad &t>0£¬ \quad \\
	&0\quad &\text{when} \quad &t\leq 0£¬ \quad \text{ʱ}¡¡
	\end{aligned}\right.
	\end{equation*}
and $\Gamma(\alpha )$ is the Gamma function. Set moreover $g_0(t):= \delta(t)$, the Dirac delta-function. The fractional Caputo derivative of order $\alpha\in (0,1)$ is defined as
\[(D_t^\alpha f)(t)=\int_0^t g_{1-\alpha}(t-s)f'(s)ds= J^{1-\alpha}f'(t). \]
  \end{defn}
  We use the notation $ \hat{f}$, $\check{f}$ to denote the Fourier  and inverse Fourier transform of a function respectively, which is given by
  \[ \hat{f}(\xi)= \frac{1}{(2\pi)^{\frac{n}{2}}}\int_{\mathbb R^n}e^{-ix\cdot \xi}f(x)dx ,\]
  \[\check{f}(x)= \frac{1}{(2\pi)^{\frac{n}{2}}}\int_{\mathbb R^n}e^{ix\cdot \xi}f(\xi)d\xi .\]
 { Then the fractional Laplacian can be defined  as}
\[ (-\Delta)^{\frac{\beta}{2}}f(x)= [|\xi|^\beta \hat{f}(\xi)]^{\vee}(x). \]
\subsection{Mittag-Leffler function}
	The Mittag-Leffler function is defined by the following power series:
	\[E_\alpha(z)=\sum_{k=0}^{\infty}\frac{z^k}{\Gamma(\alpha k+1 )}, \quad\Re \alpha>0 .\]

In this paper, we need the following asymptotic expansion of Mittag-Leffler function as $|z|\rightarrow \infty$ in various sectors of complex plan in the following proposition:
\begin{prop}[Proposition 3.6, \cite{18} ]
Let $0<\alpha<2$, and $\frac{\pi}{2}\alpha<\theta<\min\{\pi, \alpha\pi\} $, then we have the following asymptotic formula in which $k$ is an arbitrary positive integer:
\begin{align*}
 E_\alpha(z)&=\frac{1}{\alpha}\exp(z^\frac{1}{\alpha} )-\sum_{j=1}^k\frac{z^{-j}}{\Gamma(1-\alpha j )}+O(|z|^{-1-k}), \quad |z|\rightarrow\infty, \quad |\arg z | \leq \theta, \\
  E_\alpha(z)&=-\sum_{j=1}^k\frac{z^{-j}}{\Gamma(1-\alpha j )}+O(|z|^{-1-k}), \quad |z|\rightarrow\infty, \quad \theta \leq |\arg z | \leq \pi.
 \end{align*}
\end{prop}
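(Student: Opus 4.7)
The plan is to prove this classical asymptotic expansion via a Hankel-type contour representation. The starting point is the identity
\[E_\alpha(z) = \frac{1}{2\pi i}\int_{\gamma} \frac{\zeta^{\alpha-1}e^{\zeta}}{\zeta^\alpha - z}\,d\zeta,\]
valid for $z$ off the branch cut of $\zeta \mapsto \zeta^\alpha$, where $\gamma$ is a Hankel contour coming in from $-\infty$ below the negative real axis, winding once counterclockwise around the origin, and returning to $-\infty$ above. I would first verify this identity by expanding $(\zeta^\alpha - z)^{-1}$ as a geometric series in $z/\zeta^\alpha$ on a large-radius Hankel contour and applying the reciprocal-gamma Hankel formula $1/\Gamma(s) = (2\pi i)^{-1}\int_{\gamma} e^\zeta \zeta^{-s}\,d\zeta$ term-by-term; this recovers the defining power series of $E_\alpha$.

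To extract the asymptotics as $|z| \to \infty$, I would contract $\gamma$ to a contour $\gamma_r$ of fixed small radius $r>0$. On $\gamma_r$ one has $|\zeta|=r<|z|^{1/\alpha}$, so it is the opposite expansion
\[\frac{1}{\zeta^\alpha - z} \;=\; -\sum_{j=0}^{k-1} \frac{\zeta^{\alpha j}}{z^{j+1}} \;-\; \frac{\zeta^{\alpha k}}{z^k(\zeta^\alpha - z)}\]
that is convergent. Term-by-term integration, again via Hankel's formula with $s = 1-\alpha(j+1)$, produces exactly $-\sum_{j=1}^{k} z^{-j}/\Gamma(1-\alpha j)$. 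The remainder integral is handled by a direct estimate on $\gamma_r$: one uses $|\zeta^\alpha - z|\gtrsim |z|$ together with the rapid decay of $e^\zeta$ along the tails where $\Re\zeta \to -\infty$, yielding $O(|z|^{-k-1})$.

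The distinction between the two sectors comes from whether the pole $\zeta_0 = z^{1/\alpha}$ is swept through during the deformation from the original large-radius contour down to $\gamma_r$. For $|\arg z| \le \theta < \alpha\pi$, the principal value $\zeta_0$ lies in the region traversed by the deformation, and a residue is picked up, contributing
\[\operatorname{Res}_{\zeta = \zeta_0}\frac{\zeta^{\alpha-1}e^\zeta}{\zeta^\alpha-z} \;=\; \frac{\zeta_0^{\alpha-1}e^{\zeta_0}}{\alpha\,\zeta_0^{\alpha-1}} \;=\; \frac{1}{\alpha}\exp(z^{1/\alpha}).\]
For $\theta \le |\arg z| \le \pi$, the pole falls outside the swept region and no exponential term appears; moreover, on the overlap one checks consistency because $\Re z^{1/\alpha}<0$ there, so $\exp(z^{1/\alpha})$ is subdominant to every algebraic term.

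The principal obstacle is the careful bookkeeping of the contour deformation, namely (i) justifying the Hankel representation itself despite the branch cut of $\zeta^\alpha$, (ii) locating $\zeta_0$ relative to the deformed contour as $\arg z$ varies, which is precisely what selects the threshold $\theta<\alpha\pi$, and (iii) controlling the remainder \emph{uniformly} in $\arg z$ up to the boundary of the sector. The last piece requires choosing $\gamma_r$ so that $|\zeta^\alpha - z|$ stays bounded below by a positive multiple of $|z|$ for all $\zeta$ on $\gamma_r$, which is possible exactly under the angular constraint $\theta<\min\{\pi,\alpha\pi\}$ imposed in the statement.
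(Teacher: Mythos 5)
The paper does not prove this proposition at all --- it is quoted verbatim from the cited monograph of Gorenflo--Kilbas--Mainardi--Rogosin, so there is no in-paper argument to compare against. Your Hankel-contour plan is precisely the standard proof given in that reference: the integral representation, the geometric-series expansion of $(\zeta^\alpha-z)^{-1}$ on a small contour combined with Hankel's formula for $1/\Gamma$, and the residue $\tfrac{1}{\alpha}\exp(z^{1/\alpha})$ are all correct. One point of imprecision: with a full Hankel contour the pole $\zeta_0=z^{1/\alpha}$ is \emph{not} outside the swept region for $\theta\le|\arg z|<\alpha\pi$ (and for $\arg z=\pm\alpha\pi$ it lands on the contour itself); the clean way to realize your final remark about keeping $|\zeta^\alpha-z|\gtrsim|z|$ is to use contours with rays at angle $\pm\delta$, taking $\alpha\delta>\theta$ in the first sector (pole strictly inside) and $\pi/2<\delta<\theta/\alpha$ in the second (pole strictly outside) --- this is exactly what the hypothesis $\tfrac{\pi\alpha}{2}<\theta<\min\{\pi,\alpha\pi\}$ makes possible. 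With that adjustment your outline is complete; your subsidiary observation that $\exp(z^{1/\alpha})$ is exponentially small for $|\arg z|>\tfrac{\pi\alpha}{2}$ correctly reconciles the two expansions on the overlap.
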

\subsection{Bessel function}[Appendix B, \cite{9} ]
For every $\nu\in \mathbb C$ such that $\Re v>-\frac{1}{2} $, the Bessel function of the first kind and of complex order $\nu$ can be defined by the Poisson representation formula:
\[J_\nu(z)=\frac{1}{\Gamma(\frac{1}{2} )\Gamma(\nu+\frac{1}{2} )} \left(\frac{z}{2} \right)^{\nu}\int_{-1}^{1}e^{izt}(1-t^2)^{\frac{2\nu-1}{2}}dt.\]
Another expression of $J_\nu$ as a power series for an arbitrary value if $\nu\in \mathbb C$ is provided by
\[J_\nu (z)=\sum_{k=0}^{\infty}(-1)^k\frac{(\frac{z}{2})^{\nu+2k}}{\Gamma(k+1) \Gamma(k+\nu+1)}, \quad |z|<\infty, \quad |\arg z| <\pi.\]
For the asymptotic behaviour of $J_\nu(z)$ as $z\rightarrow 0 $ or $z\rightarrow \infty$, we have the following proposition.
\begin{prop}[Appendix B, \cite{9} ] Let $\Re \nu >- \frac{1}{2} $, we have
\begin{enumerate}
	\item $J_\nu(z)\sim \frac{2^{-\nu}z^\nu}{\Gamma(\nu+1)}, \quad \text{as} \quad z\rightarrow 0$.
	\item $J_{\nu}(z)=\sqrt{\frac{2}{\pi z}}\cos(z-\frac{\pi\nu }{2}-\frac{\pi}{4} )+O(|z|^{-\frac{3}{2}}), \quad \text{as}\quad |z|\rightarrow\infty, \quad -\pi+\delta<\arg z<\pi-\delta. $
\end{enumerate}
In particular, we have $J_{-\frac{1}{2}}(z)= \sqrt{\frac{2}{\pi z}}\cos(z).$	
\end{prop}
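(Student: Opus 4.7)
Since the proposition packages three standard facts about $J_\nu$, the plan is to dispatch each of them from one of the two representations of $J_\nu$ already introduced, with the real content concentrated in the large-$z$ asymptotic (2).

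For (1), I would read the leading behaviour directly off the power series: the $k=0$ term equals $(z/2)^\nu/[\Gamma(1)\Gamma(\nu+1)] = 2^{-\nu}z^\nu/\Gamma(\nu+1)$, while every remaining term carries an extra factor $(z/2)^{2k}$ with $k\ge 1$, so dividing by the claimed equivalent and letting $z\to 0$ yields $1$.

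For (2), I would work from the Poisson integral representation. Factor $(1-t^2)^{\nu-1/2} = (1-t)^{\nu-1/2}(1+t)^{\nu-1/2}$; the phase $izt$ has no interior critical point on $(-1,1)$, so the $|z|\to\infty$ asymptotic is produced entirely by the two endpoints, where the amplitude has integrable algebraic singularities (this is where $\Re\nu>-1/2$ enters). After a smooth cutoff around each endpoint and the changes of variables $t = 1-s$ and $t = -1+s$, the two local integrals take the form $e^{\pm iz}\int_0^\delta e^{\mp izs}s^{\nu-1/2}(2-s)^{\nu-1/2}\,ds$, to which Watson's lemma applies. The leading Gamma-type integral yields $e^{\pm iz}(\pm iz)^{-\nu-1/2}\,2^{\nu-1/2}\,\Gamma(\nu+1/2)$, and on the sector $|\arg z|<\pi-\delta$ the principal-branch identity $(\pm iz)^{-\nu-1/2} = z^{-\nu-1/2}e^{\mp i\pi(\nu+1/2)/2}$ collapses the sum of the two endpoint contributions into $2^{\nu+1/2}\Gamma(\nu+1/2)z^{-\nu-1/2}\cos(z-\pi\nu/2-\pi/4)$. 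Multiplying by the prefactor $(z/2)^\nu/[\Gamma(1/2)\Gamma(\nu+1/2)]$ and using $\Gamma(1/2)=\sqrt{\pi}$ produces exactly $\sqrt{2/\pi z}\cos(z-\pi\nu/2-\pi/4)$; the next term in Watson's expansion gives the $O(|z|^{-3/2})$ remainder.

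For the closed-form identity $J_{-1/2}(z) = \sqrt{2/\pi z}\cos z$, the Poisson integrand is non-integrable at the endpoints, so I would instead substitute $\nu = -1/2$ in the power series and use $\Gamma(k+1/2) = (2k)!\sqrt{\pi}/(4^k k!)$ to reduce it term by term to $\sqrt{2/\pi z}\sum_{k\ge 0}(-1)^k z^{2k}/(2k)! = \sqrt{2/\pi z}\cos z$. The main obstacle is the bookkeeping in (2): uniformly selecting principal branches of $(\pm iz)^{-\nu-1/2}$ throughout the full sector, justifying the tail control from the Watson-lemma cutoff (a standard integration by parts exploiting that the phase is non-stationary), and organising the prefactors so the two endpoint contributions combine cleanly into the asserted cosine with the stated error.
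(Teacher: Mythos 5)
This proposition is quoted verbatim from Appendix B of Grafakos's \emph{Classical Fourier Analysis} and the paper supplies no proof of its own, so there is nothing internal to compare against; your argument is the standard textbook derivation and is correct. The series expansion handles (1), the endpoint/Watson (Erd\'elyi) analysis of the Poisson integral gives (2) with the right constants (the computation $2^{\nu+1/2}\Gamma(\nu+\tfrac12)z^{-\nu-1/2}\cos(z-\tfrac{\pi\nu}{2}-\tfrac{\pi}{4})$ times the prefactor $(z/2)^{\nu}/[\Gamma(\tfrac12)\Gamma(\nu+\tfrac12)]$ checks out), and you correctly notice that $\nu=-\tfrac12$ falls outside the hypothesis of the Poisson formula, so the closed form must come from the power series via $\Gamma(k+\tfrac12)=(2k)!\sqrt{\pi}/(4^k k!)$. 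The one place needing the care you already flag is the branch of $(\pm iz)^{-\nu-1/2}$ for $\arg z$ near $\pm(\pi-\delta)$, where the identity should be justified by analytic continuation from $z>0$ rather than by a literal principal-branch computation.
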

\subsection{Sobolev and Besov spaces}[Section 6.2, \cite{2} ]
%
%
%

 Now we introduce the homogenous Sobolev spaces and Besov spaces. Let $s\in \mathbb R$ and $1\leq p, q \leq \infty$,  the homogenous Sobolev space is defined by
 \[\dot{H}^{s,p}(\mathbb R^n)=\{\varphi \in \mathcal{S}'(\mathbb R^n): \|\varphi\|_{\dot{H}^{s,p}(\mathbb R^n)}<\infty\}\]
 where $\|\varphi\|_{\dot{H}^{s,p}(\mathbb R^n)}:= \|\sum_{j=-\infty}^{\infty}(|\xi|^s\psi_j \hat{\varphi})^{\vee} \|_{L^p(\mathbb R^n)} $, and the homogenous  Besov space is given by

  \[\dot{B}_{p,q}^{s}(\mathbb R^n)=\{\varphi\in \mathcal{S}'(\mathbb R^n): \|\varphi\|_{\dot{B}_{p,q}^{s}(\mathbb R^n)}<\infty\}\]
  where \begin{equation*}
	\|\varphi\|_{\dot{B}_{p,q}^s}=
 	\begin{cases}
 		(\sum_{-\infty}^{\infty}(2^{jsq}\|(\psi_j \hat {\varphi})^{\vee}\|_{L^p})^q)^{1/q},& \quad 1\leq q <\infty\\
		\sup_{j\in \mathbb Z} 2^{js} \|(\psi_j \hat {\varphi})^{\vee}\|_{L^p}, &\quad q=\infty.
	\end{cases}
 \end{equation*}
 Note that $\|\varphi\|_{\dot{H}^{s,p}(\mathbb R^n)}=0$ or $\|\varphi\|_{\dot{B}_{p,q}^s}=0$ if and only if $\operatorname{supp} \hat{\varphi}= \{0\}$, i.e. $u$ is a polynomial.

 The nonhomogeneous Sobolev space is defined as
 \[{H}^{s,p}(\mathbb R^n)=\{\varphi\in \mathcal{S}'(\mathbb R^n): [(1+|\xi|^2)^{\frac{s}{2}}\hat{\varphi}]^{\vee}\in L^{p}(\mathbb R^n)\},\]
 with the norm $\|\varphi\|_{{H}^{s,p}(\mathbb R^n)}:=\|[(1+|\xi|^2)^{\frac{\xi}{2}}\hat{\varphi}]^{\vee}\|_{L^{p}(\mathbb R^n)}$.
The nonhomogeneous Besov space is given by
\[{B}_{p,q}^{s}(\mathbb R^n)=\{\varphi\in \mathcal{S}'(\mathbb R^n): \|\varphi\|_{{B}_{p,q}^{s}(\mathbb R^n)}<\infty\}\]
where
\[\|\varphi\|_{{B}_{p,q}^s}:= \|(\phi\hat{\varphi})^{\vee}\|_{L^{p}(\mathbb R^n)}+
 	\begin{cases}
 		(\sum_{j=1}^{\infty}(2^{js}\|(\psi_j \hat {\varphi})^{\vee}\|_{L^p})^q)^{1/q},& \quad 1\leq q <\infty\\
		\sup_{j\geq 1} 2^{js} \|(\psi_j \hat {\varphi})^{\vee}\|_{L^p}, &\quad q=\infty.
	\end{cases}.\]	
	{{Notice that $L^p(\mathbb R^n)= \dot{B}^{0}_{2,p}(\mathbb R^n)$} and $\dot{{B}}_{p,q}^s(\mathbb R^n)\hookrightarrow \dot{B}_{q,p}^s(\mathbb R^n)$ if $q\leq p$.}

\section{The proof of the main results}
Before proving our main results, we give the following two lemmas:

\begin{lem}
For $0<\beta\leq n$,
\begin{enumerate}
	\item If $\beta\neq \frac{n}{m}$ for some positive integer m, then we have
	\[K_t(x)=\sum_{k=1}^{[\frac{n}{\beta} ]}C_k|x|^{-n+\beta k}t^{-\alpha k}+Ct^{-\frac{n}{\beta}\alpha}W({x}{t^{-\frac{\alpha}{\beta}}})\]
	where $W(x)\in L^{\infty}(\mathbb R^n)$.
	\item If $\beta=\frac{n}{m}$ for some positive integer m, we have
	\[K_t(x)=\sum_{k=1}^{m-1}C_k|x|^{-n+\beta k}t^{-\alpha k}+C_mt^{-m\alpha}W_1(xt^{-\frac{\alpha}{\beta}})+Ct^{-\frac{n}{\beta}\alpha}W({x}{t^{-\frac{\alpha}{\beta}}})\]
	where $W(x)\in L^{\infty}(\mathbb R^n)$ and $W_1(x)\sim \ln|x|$, as $|x|\rightarrow 0$.
\end{enumerate}
In particular, we have $K_t(x)\notin L^{\infty}(\mathbb R^n)$ when $0<\beta \leq n$.
\end{lem}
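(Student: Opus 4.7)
The kernel enjoys a natural scaling symmetry: substituting $\xi = t^{-\alpha/\beta}\eta$ in the Fourier integral gives
\[K_t(x) = t^{-\frac{n\alpha}{\beta}} F\bigl(x\, t^{-\frac{\alpha}{\beta}}\bigr), \qquad F(y) := \bigl[E_\alpha(-i|\xi|^\beta)\bigr]^{\vee}(y).\]
Thus it suffices to establish a decomposition
\[F(y) = \sum_{k=1}^{[n/\beta]} C_k|y|^{-n+\beta k} + W(y)\]
in case (1), with an additional piece $C_m W_1(y)$ satisfying $W_1(y)\sim\ln|y|$ near the origin in case (2). Undoing the scaling then produces the $t^{-\alpha k}$ powers; note that when $\beta=n/m$ we have $t^{-n\alpha/\beta}=t^{-m\alpha}$, matching the prefactor in the statement.

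To build the decomposition, split $F = F_{\mathrm{low}}+F_{\mathrm{high}}$ using the cutoff $\phi$ from the Introduction: set $F_{\mathrm{low}} := [\phi\, E_\alpha(-i|\xi|^\beta)]^{\vee}$. Because $\phi E_\alpha(-i|\xi|^\beta)$ is bounded and compactly supported, $F_{\mathrm{low}}\in L^\infty(\mathbb{R}^n)$ and can be absorbed into $W$. For $F_{\mathrm{high}}$, apply the Mittag--Leffler asymptotics: since $\arg(-i|\xi|^\beta)=-\pi/2$ and $0<\alpha<1$, one can pick $\theta$ with $\pi\alpha/2<\theta<\pi/2$, so the non-exponential formula applies for $|\xi|\geq 1$,
\[E_\alpha(-i|\xi|^\beta) = -\sum_{j=1}^{N}\frac{(-i)^{-j}}{\Gamma(1-\alpha j)}|\xi|^{-\beta j} + R_N(\xi), \qquad R_N(\xi)=O(|\xi|^{-\beta(N+1)}).\]
Choosing $N$ so that $\beta(N+1)>n$ makes $(1-\phi)R_N$ integrable, hence its inverse Fourier transform is bounded.

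The heart of the argument is identifying the singular terms. For each $j$ with $\beta j<n$, write
\[\bigl[(1-\phi)|\xi|^{-\beta j}\bigr]^{\vee}(y) = \bigl[|\xi|^{-\beta j}\bigr]^{\vee}(y) - \bigl[\phi|\xi|^{-\beta j}\bigr]^{\vee}(y) = c_{n,j}|y|^{-n+\beta j} + G_j(y),\]
using the classical Riesz-potential identity for the first term, while $\phi|\xi|^{-\beta j}$ is compactly supported and integrable so $G_j$ is bounded and smooth. Summing over $1\leq j\leq [n/\beta]$ in case (1) (respectively $1\leq j\leq m-1$ in case (2)) produces the singular sum $\sum C_k|y|^{-n+\beta k}$; all smooth corrections join $W$.

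The delicate step is case (2), where the $j=m$ index contributes $|\xi|^{-n}$, which is not locally integrable and whose inverse transform is no Riesz potential. Using the radial Bessel representation via $J_{(n-2)/2}$ from Section~2 (or, equivalently, the distributional identity for $|\xi|^{-n}$), one obtains
\[\bigl[(1-\phi)|\xi|^{-n}\bigr]^{\vee}(y) = C'_m\log\tfrac{1}{|y|} + (\text{bounded}),\]
so the contribution is a function $W_1(y)\sim\ln|y|$ as $y\to 0$ that remains bounded at infinity. The main obstacle is precisely checking that this error term, together with $F_{\mathrm{low}}$, the $G_j$'s, and the transform of $R_N$, really combine into a single function $W\in L^\infty(\mathbb{R}^n)$ with the claimed scaling. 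Once the decomposition of $F$ is in place, rescaling gives the stated form of $K_t$, and $K_t\notin L^\infty$ follows immediately from the blow-up of each singular term (and of $\ln$) at the origin, which the bounded pieces cannot cancel.
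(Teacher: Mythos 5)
Your proposal is correct and follows essentially the same route as the paper: the scaling substitution $\eta = x t^{-\alpha/\beta}$, the low/high frequency cutoff, the algebraic Mittag--Leffler expansion on the ray $\arg z = -\pi/2$, the Riesz-potential identity for the terms with $\beta j < n$, and the Bessel-function computation yielding the $\ln|y|$ singularity when $\beta j = n$. Your explicit choice of $N$ with $\beta(N+1) > n$ for the remainder is in fact slightly more careful than the paper's statement of the error term.
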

	\begin{proof}
		(1) To start with, we rewrite the kernel as:
		\begin{align*}
			K_t(x)&=\int_{\mathbb R^n}E_{\alpha}(-it^\alpha|\xi|^\beta)e^{ix\cdot \xi}d\xi\\
			&= t^{-\frac{n}{\beta}\alpha}\int_{\mathbb R^n}E_{\alpha}(-i|\xi|^\beta)e^{ix t^{-\frac{\alpha}{\beta}}\cdot \xi}d\xi.
		\end{align*}
	By writing $\eta=x t^{-\frac{\alpha}{\beta}}$, we have
	\begin{align*}
			K_t(x)&= t^{-\frac{n}{\beta}\alpha}\int_{\mathbb R^n}E_{\alpha}(-i|\xi|^\beta)e^{i\eta \cdot \xi}d\xi\\
			&=t^{-\frac{n}{\beta}\alpha}\int_{\mathbb R^n}E_{\alpha}(-i|\xi|^\beta)\Phi (\xi) e^{i\eta \cdot \xi}d\xi+t^{-\frac{n}{\beta}\alpha}\int_{\mathbb R^n}E_{\alpha}(-i|\xi|^\beta)(1-\Phi(\xi) )e^{i\eta \cdot \xi}d\xi,
		\end{align*}
		
	where $\Phi \in C^{\infty}(\mathbb R^n)$ with $$\Phi(s) = \begin{cases} 1 & |s| \ge 2, \\ 0 &|s| \le 1. \end{cases}$$
	
Now, we define $K_t^1(\eta):=\int_{\mathbb R^n}E_{\alpha}(-i|\xi|^\beta)\Phi(\xi) e^{i\eta \cdot \xi}d\xi$ and  $K_t^2(\eta):=\int_{\mathbb R^n}E_{\alpha}(-i|\xi|^\beta)(1-\Phi(\xi) )e^{i\eta \cdot \xi}d\xi$.
One directly see that 
\[|K_t^2(\eta)|\leq C \quad \text{for}\quad t>0, \eta\in \mathbb R^n. \]
For the first term, we use the Taylor expansion of the Mittag-Leffler function to see that
\begin{equation}\label{eq6}
	 K_t^1(\eta)=\int_{\mathbb R^n}\bigg(\sum_{k=1}^{[\frac{n}{\beta} ]}C_k|\xi|^{-\beta k}+R(\xi)\bigg)\Phi(\xi)e^{i\eta\cdot\xi}d\xi,\end{equation}
where $R(\xi))= O(|\xi|^{-\beta[\frac{n}{\beta}]})$ as $|\xi|\rightarrow \infty.$
{Using the identity }
\[\mathcal F^{-1}(|\xi|^{-\theta})=C|x|^{-n+\theta},\quad \text{for} \quad {0<\theta <n},\]
we see that
\begin{align*}	
K_t^1(\eta)&=\sum_{k=1}^{[\frac{n}{\beta} ]}C_k'|\eta|^{-n+\beta k}+\int_{\mathbb R^n}\sum_{k=1}^{[\frac{n}{\beta} ]}C_k|\xi|^{-\beta k}(\Phi(\xi)-1)e^{i\eta\cdot\xi}d\xi+\tilde{W}(\eta)\\
&=\sum_{k=1}^{[\frac{n}{\beta} ]}C_k'|\eta|^{-n+\beta k}+W(\eta),
\end{align*}
where $W(\eta)\in L^\infty(\mathbb R^n)$.

(2) Note that it is sufficient to show  that
\[W_1(\eta)=\int_{\mathbb R^n}|\xi|^{-n}\Phi(\xi)e^{i\eta\cdot \xi}d\xi\sim \ln(|\eta|) \quad \text{as}\quad |\eta| \rightarrow 0.\]
In fact, we have
\[W_1(\eta)=\int_{|\xi|\geq 2}|\xi|^{-n}e^{i\eta\cdot \xi}d\xi+\int_{1\leq |\xi|\leq 2}|\xi|^{-n}\Phi(\xi)e^{i\eta\cdot \xi}d\xi.\]
Note that
\[\int_{1\leq |\xi|\leq 2}|\xi|^{-n}\Phi(\xi)e^{i\eta\cdot \xi}d\xi \leq C \quad\text{for}\quad \forall \eta \in \mathbb R^n,\]
as for the first term,
\begin{align*}
	\int_{|\xi|\geq 2}|\xi|^{-n}e^{i\eta\cdot \xi}d\xi &=
	\int_2^{+\infty}r^{-1}(r|\eta|)^{\frac{2-n}{2}}J_{\frac{n-2}{2}}(r|\eta|)dr\\
	&=\int_{2|\eta|}^{+\infty}t^{-1}t^{\frac{2-n}{2}}J_{\frac{n-2}{2}}(t)dt\\
	&=\int_{2|\eta|}^{1}t^{-1}t^{\frac{2-n}{2}}J_{\frac{n-2}{2}}(t)dt+\int_{1}^{+\infty}t^{-1}t^{\frac{2-n}{2}}J_{\frac{n-2}{2}}(t)dt,
\end{align*}
The result now follows because
\[\left|\int_{1}^{+\infty}t^{-1}t^{\frac{2-n}{2}}J_{\frac{n-2}{2}}(t)dt\right|< \infty,\]
and
\[\int_{2|\eta|}^{1}t^{-1}t^{\frac{2-n}{2}}J_{\frac{n-2}{2}}(t)dt\sim \int_{2|\eta|}^{1}\frac{dt}{t}\sim \ln|\eta|.  \]
Here we used the property  of Bessel functions that
\[J_{\frac{n-2}{2}}(t)\sim \frac{1}{\Gamma(\frac{n}{2} )}\left(\frac{t}{2} \right)^{\!\!\frac{n-2}{2}}. \]
This concludes the proof of the lemma.
	
	\end{proof}
	
\begin{rem} {{When we compare the above results with the case $\alpha=1, \beta=2$,  the asymptotic expansion of equation \eqref{eq6} isn't valid. In that case, $E_\alpha(-i|\xi|^2)= e^{-i|\xi|^2}$, thus $K_t^1(\eta)\in L^{\infty}(\eta)$}}.
	
\end{rem}

Now we are ready to prove  Theorem \ref{Lebesgue estimate} .
\begin{proof} To start with, we prove second statement is valid. First we recall that,  for any $\frac{\pi}{2}\alpha <|\arg z|\leq \pi$
\[|E_\alpha(z)|\lesssim \begin{cases}
	C_R & |z|\leq R, \\
	|z|^{-1} & |z|>R.
	\end{cases}\]
	Then  for  $\beta>n$, we have
	\begin{align*}
	|K_t(x)|&=t^{-\frac{n}{\beta}\alpha}\left|\int_{\mathbb R^n}
	E_\alpha(-i|\xi|^\beta)e^{i\eta\cdot \xi}d\xi\right|\\
	&\leq t^{-\frac{n}{\beta}\alpha}\int_{\mathbb R^n}|E_\alpha(-i|\xi|^{\beta})|d\xi \\
	&\lesssim t^{-\frac{n}{\beta}\alpha}\int_{\mathbb R^n}\frac{d\xi}{1+|\xi|^\beta}\lesssim t^{-\frac{n}{\beta}\alpha}. \\
	\end{align*}
	
As for the case $0<\beta\leq n$, and  $N$ is a dyadic number, i.e. $N=2^j$

\begin{align*}
	|K_t^N(x)|&=\int_{\mathbb R^n}E_\alpha(-it^\alpha |\xi|^\beta)\psi\left(\frac{|\xi|}{N}\right) e^{ix\cdot \xi}d\xi
	\\&= N^n\int_{\mathbb R^n}E_\alpha(-it^\alpha N^\beta |\xi|^\beta)\psi(\xi)e^{iNx\cdot \xi}d\xi\\
	&= N^n \int_0^\infty E_\alpha(-it^\alpha N^\beta |r|^\beta)\psi(r)r^{n-1}J_{\frac{n-2}{2}}(rN|x|)(N|x|)^{\frac{2-n}{2}}dr,
\end{align*}	
since $\operatorname{supp} \psi \subseteq \mathcal C=\{|\xi|: \frac{1}{2}\leq |\xi|\leq 2\}$, we have
\begin{align*}
	|K_t^N(x)|&\lesssim N^n \int_{\frac{1}{2}}^2 |E_\alpha(-it^{\alpha}N^\beta r^\beta)|dr\lesssim \frac{N^n}{1+t^\alpha N^\beta},
\end{align*}
Hence, by Young's inequality for convolutions, we have proven the theorem.
\end{proof}
{{Notice that, when $0<\beta\leq n$,  according to our method, the decay rate with respect to time is $t^{-\alpha}$, which doesn't depend on the dimension.}}

As results of the theorems, we have the following two corollaries:
\begin{cor} Let $\alpha>0, \beta>n $. If $p \in [2,\infty)$ and $t>0$, then $T_t$ maps $L^{p'}(\mathbb R^n)$ continuously to $L^{p}(\mathbb R^n)$ and we have the estimates
\[\|T_t \varphi\|_{L^{p}(\mathbb R^n)}\lesssim  t^{-\frac{2n\alpha}{\beta}(\frac{1}{2}-\frac{1}{p} )}\|\varphi\|_{L^{p'}(\mathbb R^n)}, \quad \text{ for all } \quad \varphi \in L^{p'}(\mathbb R^n). \]
	
\end{cor}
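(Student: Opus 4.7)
The plan is to prove the corollary by a standard Riesz--Thorin interpolation between the two endpoint bounds $L^{2}\to L^{2}$ and $L^{1}\to L^{\infty}$, both of which have already been established earlier in the paper. Since this is a linear endpoint-to-endpoint dispersive estimate for the single operator $T_t$, no new harmonic-analytic input is needed.

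First I would record the $L^{2}$ endpoint. Since $0<\alpha<1$, the multiplier $E_\alpha(-it^\alpha|\xi|^\beta)$ is uniformly bounded on the sector $\tfrac{\pi}{2}\alpha<|\arg z|<\pi$ (this is the standard Mittag-Leffler bound cited at the beginning of the paper, taken from \cite{18}), so Plancherel gives
\[
\|T_t\varphi\|_{L^{2}(\mathbb R^{n})}\;\le\;C\,\|\varphi\|_{L^{2}(\mathbb R^{n})}.
\]
Next I would record the $L^{1}\to L^{\infty}$ endpoint, which is precisely part (2) of Theorem~\ref{Lebesgue estimate}: under the hypothesis $\beta>n$,
\[
\|T_t\varphi\|_{L^{\infty}(\mathbb R^{n})}\;\lesssim\;t^{-\frac{n\alpha}{\beta}}\,\|\varphi\|_{L^{1}(\mathbb R^{n})}.
\]

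Now I would apply the Riesz--Thorin interpolation theorem to $T_t$, with endpoints $(p_0,q_0)=(2,2)$ (norm $\lesssim 1$) and $(p_1,q_1)=(1,\infty)$ (norm $\lesssim t^{-n\alpha/\beta}$). Choosing the interpolation parameter $\theta=1-\tfrac{2}{p}\in[0,1]$ (valid precisely because $p\in[2,\infty)$), one checks
\[
\tfrac{1}{p'}=\tfrac{1-\theta}{2}+\tfrac{\theta}{1},\qquad \tfrac{1}{p}=\tfrac{1-\theta}{2}+\tfrac{\theta}{\infty},
\]
so $T_t$ is bounded from $L^{p'}$ to $L^{p}$ with operator norm
\[
\lesssim\;1^{\,1-\theta}\cdot \bigl(t^{-n\alpha/\beta}\bigr)^{\theta}
\;=\;t^{-\frac{n\alpha}{\beta}\left(1-\frac{2}{p}\right)}
\;=\;t^{-\frac{2n\alpha}{\beta}\left(\frac{1}{2}-\frac{1}{p}\right)},
\]
which is exactly the stated bound. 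The cases $p=2$ and $p=\infty$ (the latter really meaning $p'=1$, $p=\infty$) reduce to the two endpoint estimates themselves.

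I do not expect any obstacle here: both endpoints are already proved, the operator $T_t$ is linear, and the exponent matches after the elementary identity $1-\tfrac{2}{p}=2\bigl(\tfrac{1}{2}-\tfrac{1}{p}\bigr)$. The only thing worth double-checking while writing the proof is that the $L^{1}\to L^{\infty}$ constant in Theorem~\ref{Lebesgue estimate}(2) depends on $t$ only through the explicit factor $t^{-n\alpha/\beta}$ (so that Riesz--Thorin gives a clean power of $t$ in the interpolated bound), which is indeed the form in which that theorem is stated.
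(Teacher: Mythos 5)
Your proposal is correct and is exactly the paper's argument: the authors simply invoke the Riesz--Thorin interpolation theorem between the $L^2\to L^2$ bound (from the uniform boundedness of the Mittag-Leffler multiplier) and the $L^1\to L^\infty$ bound of Theorem~3.2(2), which is what you carry out in detail. Your exponent bookkeeping with $\theta=1-\tfrac{2}{p}$ checks out.
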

\begin{proof} Using Riesz-Thorin interpolation theorem, we immediately get the result.
	
\end{proof}

\begin{cor}Let $\alpha>0, \beta>n $, for $t>0$ then we have
\[\|T_t \varphi\|_{\dot{H}^{s,p}(\mathbb R^n)}\lesssim  t^{-\frac{2n\alpha}{\beta}(\frac{1}{2}-\frac{1}{p} )}\|\varphi\|_{\dot{H}^{s,p}(\mathbb R^n)}, \quad \text{ for all } \quad \varphi \in \dot{H}^{s,p'}(\mathbb R^n), \]
and
\[\|T_t \varphi\|_{\dot{B}_{p,q}^s(\mathbb R^n)}\lesssim  t^{-\frac{2n\alpha}{\beta}(\frac{1}{2}-\frac{1}{p} )}\|\varphi\|_{\dot{B}_{p',q}^s(\mathbb R^n)}, \quad \text{ for all } \quad \varphi \in \dot{B}_{p,q}^s(\mathbb R^n), \]
where $s\in \mathbb R$ and $2\leq p\leq \infty$ and $1\leq q \leq \infty$.
And the above estimates holds for the nonhomgeneous Sobolev spaces and Besov spaces.
\end{cor}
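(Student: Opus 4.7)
The plan is to exploit the fact that $T_t$ is a Fourier multiplier whose symbol $m_t(\xi) = E_\alpha(-it^\alpha |\xi|^\beta)$ is radial. Therefore $T_t$ commutes with every other Fourier multiplier, in particular with the fractional Laplacian $(-\Delta)^{s/2}$ and with the Littlewood--Paley projectors $\psi_j(D)$ and $\phi(D)$ used to define the homogeneous and nonhomogeneous Sobolev and Besov norms. Combined with the $L^{p'}\to L^p$ bound of the preceding corollary, both estimates drop out essentially for free.

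Concretely, for the homogeneous Sobolev estimate I would write
\[ \|T_t\varphi\|_{\dot H^{s,p}} \;=\; \|(-\Delta)^{s/2} T_t \varphi\|_{L^p} \;=\; \|T_t (-\Delta)^{s/2}\varphi\|_{L^p} \;\lesssim\; t^{-\frac{2n\alpha}{\beta}(\frac{1}{2}-\frac{1}{p})} \,\|(-\Delta)^{s/2}\varphi\|_{L^{p'}}, \]
applying the previous corollary to $(-\Delta)^{s/2}\varphi$ in the last step, which is exactly $\|\varphi\|_{\dot H^{s,p'}}$. For the Besov estimate, I set $\varphi_j := (\psi_j\hat\varphi)^\vee$, use the commutation $T_t \varphi_j = (\psi_j \widehat{T_t\varphi})^\vee$, and invoke the previous corollary blockwise to obtain $\|T_t \varphi_j\|_{L^p} \lesssim t^{-\frac{2n\alpha}{\beta}(\frac{1}{2}-\frac{1}{p})} \|\varphi_j\|_{L^{p'}}$; summing over $j\in\mathbb Z$ in $\ell^q$ against the weights $2^{js}$ (or taking the supremum when $q=\infty$) gives the claimed bound. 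The nonhomogeneous versions follow identically after also treating the low-frequency block $(\phi\hat\varphi)^\vee$ by the same argument.

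I do not anticipate any substantive obstacle: everything reduces to the identity $T_t = m_t(D)$ together with the estimate already established. The only minor subtlety worth flagging is that homogeneous Sobolev and Besov spaces live in $\mathcal S'$ modulo polynomials, so strictly speaking one should prove the bound for Schwartz data and extend by density, or equivalently work on the quotient; this is routine and does not affect the stated decay rate.
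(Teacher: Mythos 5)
Your argument is correct and is precisely the reasoning the paper leaves implicit (the corollary is stated without proof): since $T_t$ is a Fourier multiplier it commutes with $(-\Delta)^{s/2}$, with the Littlewood--Paley blocks $\psi_j(D)$, and with $\phi(D)$, so the $L^{p'}\to L^p$ bound of the preceding corollary (together with the $L^1\to L^\infty$ bound of Theorem 1.1(2) for the endpoint $p=\infty$) transfers blockwise to the Sobolev and Besov norms. Your remark about working modulo polynomials, and your reading of the right-hand side of the Sobolev estimate as the $\dot H^{s,p'}$ norm, are both consistent with what the statement evidently intends.
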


In the above theorems, we give the upper bound for the fundamental solution of the equation \eqref{fse}, now we prove that the estimate in the Theorem \ref{Lebesgue estimate} is optimal.

\begin{proof}
	Note that we can always choose $t, N$ such that $t^\alpha N^\beta  \geq 1$, so
	$\frac{N^n}{1+t^\alpha N^\beta}\sim t^{-\alpha}N^{n-\beta} $. Therefore, it is sufficient to show that exists $t_0, N_0$ such that
	\[\sup_{x\in \mathbb R^n}|K_t(x)|\gtrsim t^{-\alpha} N^{n-\beta}  \quad \text{for} \quad t>t_0,\quad N\geq N_0. \]
	Notice that
	\[|K_t(x)|=N^n \left|\int_{\frac{1}{2}}^2 E_\alpha(-it ^\alpha N^\beta r^\beta  )\psi(r)r^{n-1}(r|x|)^{\frac{2-n}{2}}J_{\frac{n-2}{2}}(r|x|)dr \right| := N^n |I|,\]
	we write $I$ as the following,
	\begin{align*}
		I&=\int_{\frac{1}{2}}^2 (E_\alpha(-it ^\alpha N^\beta r^\beta) -it^{-\alpha}N^{-\beta}r^{-\beta} )\psi(r)r^{n-1}(r|x|)^{\frac{2-n}{2}}J_{\frac{n-2}{2}}(r|x|)dr\\ & \quad +\int_{\frac{1}{2}}^2 it^{-\alpha}N^{-\beta}r^{-\beta} \psi(r)r^{n-1}(r|x|)^{\frac{2-n}{2}}J_{\frac{n-2}{2}}(r|x|)dr\\
		:\hspace{-0.25em}&= I_1+I_2,
	\end{align*}
	then we have $|K_t(x)|\gtrsim N^n (|I_1|-|I_2|)$.

	It is easy to check that
	\[|I_1|\lesssim (t^\alpha N^\beta)^{-2}\sim t^{- \alpha }N^{-\beta}(t^\alpha N^\beta )^{-1}.\]
For the second term, notice that
	\[|I_2|\sim N^{-\beta}t^{-\alpha}\left|\int_{\frac{1}{2}}^2 \psi(r)r^{n-1-\beta}(rN|x|)^{\frac{2-n}{2}}J_{\frac{n-2}{2}}(rN|x|)dr\right|,\]
	and we can always choose $x$ such that $2N|x|$ is small enough such that $J_{\frac{n-2}{2}}(rN|x|)>0$ is valid for all $\frac{1}{2}<r<2$.
	Therefore, we have proven that $|K_t(x)|\gtrsim N^{n-\beta}t^{-\alpha}. $
\end{proof}

 Now we give the proof for the Theorem \ref{Besov estimate}.
\begin{proof}
	(1) To start with, we define  $P_{\sim N}=\sum_{\frac{N}{2}\leq 2^j\leq 2N}P_{2^j},$ notice that $\psi\left(\frac{\xi}{2N}\right)+\psi\left(\frac{\xi}{N}\right)+\psi\left(\frac{2\xi}{N}\right)= \phi(\frac{\xi}{2^{j+1}})-\phi(\frac{\xi}{2^{j-2}})$, which equals $1$ on the support of $\psi$,
 then we have
\begin{align*}
 P_N(T_t\varphi)(x)&=\int_{\mathbb{R}^n} e^{ix\xi} E_\alpha(-it^\alpha |\xi|^\beta) \psi\left(\frac{\xi}{N}\right) \hat{\varphi}(\xi) d\xi\\
 &=\int_{\mathbb{R}^n} e^{ix\xi} E_\alpha(-it^\alpha |\xi|^\beta) \psi\left(\frac{\xi}{N}\right)\left( \psi\left(\frac{\xi}{2N}\right)+\psi\left(\frac{\xi}{N}\right)+\psi\left(\frac{2\xi}{N}\right)\right) \hat{\varphi}(\xi) d\xi
   \end{align*}
By H\"older inequality and the fact that
	$|E_\alpha(z) |\leq C $ for $ \frac{\pi}{2}\alpha<|\arg z|\leq \pi,$
we obtain that
\begin{align*}
 |P_N(T_t\varphi)(x)|  &  \le \| \widehat{ P_{\sim N} \varphi } \|_{L^r} \left\| E_\alpha(-it^\alpha |\xi|^\beta) \psi\left(\frac{\xi}{N}\right) \right\|_{L^{r'}} \\
   & \lesssim  N^{\frac{n}{r'}} \left\| \widehat{ P_{\sim N} \varphi } \right\|_{L^r}.
\end{align*}
By the Hausdorff-Young inequality, we have for $t>0$,
\begin{equation}\label{eq1}
 \| P_N(T_t\varphi)  \|_{L^\infty} \lesssim N^{\frac{n}{r'}} \|  P_{\sim N} \varphi  \|_{L^{r'}}. \end{equation}	
 In particular, we have 
 \begin{equation}\label{eq2}
 \| P_N(T_t\varphi)  \|_{L^\infty} \lesssim N^{\frac{n}{2}} \|  P_{\sim N} \varphi  \|_{L^{2}}. \end{equation}
 
Taking the sum in $N$ in inequality of  \eqref{eq1}, we get 
\begin{equation}\label{eq11}
	\| T_t\varphi  \|_{L^\infty} \lesssim \|\varphi\|_{\dot{B}_{r',1}^{\frac{n}{r'}}}.\end{equation} 	

Notice that $P_N(T_t\varphi)= K_N \ast P_{\sim N}\varphi$, and according to Theorem 1.2, we have
\begin{equation} \label{eq3}
\| P_N(T_t\varphi)  \|_{L^\infty} \lesssim t^{-\alpha} N^{n-\beta} \|  P_{\sim N} \varphi  \|_{L^{1}}.
 \end{equation}
Thus, by interpolation for $2\le r\le \infty$ between \eqref{eq1} and \eqref{eq3}, we have for $t>0$,
 \begin{equation} \label{eq10}
 \| T_t\varphi  \|_{L^\infty} \lesssim t^{-\alpha(1-\frac{2}{r})} N^{\frac{n}{r'}-\beta (1-\frac{2}{r})}\|  P_{\sim N} \varphi  \|_{L^{r'}},\end{equation}
Taking sum in $N$, we have the following result,
 
\begin{equation} \| T_t\varphi  \|_{L^\infty} \lesssim t^{-\alpha(1-\frac{2}{r})}  \|  \varphi  \|_{\dot{B}_{r',1}^{\frac{n}{r'}-\beta (1-\frac{2}{r} )}(\mathbb R^n)}. \end{equation}
 
Combine \eqref{eq11} and \eqref{eq10}, we get the inequality \eqref{eq7}.  

On the other hand, we have
\begin{equation} \label{eq4} \| P_N(T_t\varphi)  \|_{L^\infty} \lesssim N^{n} \|  P_{\sim N} \varphi  \|_{L^{1}}, \end{equation}	
  and the $L^2$ estimates:
\begin{equation}\label{eq5} \| P_N(T_t\varphi)  \|_{L^2} \lesssim  \|  P_{\sim N} \varphi  \|_{L^{2}}. \end{equation}
  
 Combining  the inequalities of \eqref{eq4} and \eqref{eq5} , we have 
 \begin{equation} \label{eq6}	
 \| P_N(T_t\varphi)  \|_{L^r} \lesssim N^{n(1-\frac{2}{r})}  \|  P_{\sim N} \varphi  \|_{L^{r'}},\end{equation}
 and summing in $N$, we have 
 \begin{equation}
 	\|T_t\varphi \|_{L^r(\mathbb R^n)}\lesssim \|\varphi  \|_{\dot{B}^{n(1-\frac{2}{r})}_{r', 2}  }
 \end{equation}

And, by interpolating between \eqref{eq3} and \eqref{eq6}, we have 
\begin{equation} \| P_N(T_t\varphi)  \|_{L^r} \lesssim t^{-\alpha(1-\frac{2}{r})} N^{(n-\beta)(1-\frac{2}{r})}  \|  P_{\sim N} \varphi  \|_{L^{r'}}.  \end{equation}
Hence, summation in $N$ gives the result of \eqref{eq8}.

{Similarly, if we combine \eqref{eq6} and \eqref{eq5}, we proved our last inequality of \eqref{eq9}.}
\end{proof}

%

$^1$ Institute of Applied Physics and Computational Mathematics, 100094, P.R.China.

    E-mail: 245440714@qq.com\\

$^2$ Department of Mathematics, Sichuan University, 610064, P.R.China.

  E-mail: zhaoshiliang@scu.edu.cn\\

$^3$ Department of Mathematics, Sichuan University, 610064, P.R.China.

 E-mail: mli@scu.edu.cn

\end{document}